
\documentclass[letterpaper, 10 pt, conference]{ieeeconf}  

\IEEEoverridecommandlockouts                              

\overrideIEEEmargins                                      




\usepackage{graphicx}
\usepackage{mathtools}
\usepackage{amsfonts}
\usepackage{tikz-cd}
\usepackage{amssymb}
\usepackage{import}

\newcommand{\st}{\mathcal{X}} 
\newcommand{\fs}{\mathcal{F}} 
\newcommand{\gd}{\mathcal{G}} 

\newcommand{\Z}{\mathbb{Z}}
\newcommand{\R}{\mathbb{R}}

\newcommand{\Lip}{\textnormal{Lip}}

\newcommand{\dft}{\mathbf{v}}

\newcommand{\ind}{\textnormal{ind}}

\newcommand{\slot}{\,\cdot\,} 

\newcommand{\T}{T}

\newcommand{\interior}{\textnormal{int}}
\newcommand{\dom}{\textnormal{dom}}

\newcommand{\vp}{\varphi}

\newcommand{\Hom}{H}

\newcommand{\CASc}{\v{C}ech-Alexander-Spanier cohomology}

\newcommand{\concept}[1]{\textbf{#1}}

\newcommand{\gi}{\Xi}

\DeclarePairedDelimiter\norm{\lVert}{\rVert}
\DeclareMathOperator{\id}{id}

\newtheorem{Def}{Definition}
\newtheorem{Lem}{Lemma}
\newtheorem{Th}{Theorem}

\newtheorem{Rem}{Remark}

\title{\LARGE \bf
Poincar\'{e}-Hopf Theorem for
Hybrid Systems
}

\author{Matthew D. Kvalheim$^{*}$
\thanks{$^{*}$School of Engineering and Applied Science, University of Pennsylvania,
	Philadelphia, PA 19104, USA}%
}

\begin{document}

\maketitle
\thispagestyle{empty}
\pagestyle{empty}

\begin{abstract}
A generalization of the Poincar\'{e}-Hopf index theorem applicable to hybrid dynamical systems is obtained.
For the hybrid systems considered, guard sets are not assumed to be smooth; distinct ``modes'' are not assumed to have constant dimension; and resets are arbitrary multivalued maps (relations).
\end{abstract}

\section{Introduction}\label{sec:intro}
The Poincar\'{e}-Hopf theorem is a fundamental result of dynamical systems theory and differential topology.
Its simplest form asserts that, for a smooth vector field $\dft$ on a compact boundaryless manifold $M$ with isolated zeros,  the sum of the (Hopf) indices of $\dft$ at its zeros coincides with the Euler characteristic $\chi(M)$ of $M$.
In this paper we obtain a generalization of this theorem which is applicable to a very broad class of hybrid systems.

Hybrid (dynamical) systems are mathematical models of behaviors evolving both continuously and discretely in time.
One motivation for studying hybrid systems comes from robotics and biomechanics, where the making and breaking of contacts intrinsic to most tasks---such as legged locomotion \cite{Holmes_Full_Koditschek_Guckenheimer_2006,revzen2015data,seipel2017conceptual,westervelt2007feedback}---necessitates the introduction of hybrid system models \cite{Koditschek_2021}.
Many investigators have advanced the program\footnote{This program has important contributions from many investigators. We mention only a few here: \cite{Back_Guckenheimer_Myers_1993,Guckenheimer_1995,ye1998stability,alur2000discrete,simic2000towards,lygeros2003dynamical,westervelt2003hybrid, simic2005towards,haghverdi2005bisimulation,ames2005characterization,ames2006categorical, Goebel_Sanfelice_Teel_2009,lerman2016category,Johnson_Burden_Koditschek_2016, pace2017piecewise,lerman2020networks,clark2020existence,clark2021invariant}.} of developing hybrid dynamical systems theory to the same footing as its more mathematically mature parents, the theories of continuous-time or discrete-time dynamical systems.
One approach seeks to generalize results from classical dynamical systems theory to the hybrid setting.\footnote{Examples include extensions of local \cite{simic2001structural} and global \cite{broucke2001structural} structural stability results, contraction analysis \cite{burden2018contraction, burden2018generalizing}, and tools related to periodic orbits \cite{Burden_Sastry_Koditschek_Revzen_2016} such as Floquet theory \cite{Burden_Revzen_Sastry_2015}, averaging theory \cite{De_Burden_Koditschek_2018}, and the Poincar\'{e}-Bendixson theorem \cite{simic2002hybrid, clark2019poincare,clark2020poincare}.}
The present paper represents an additional contribution in this direction.
Note that a Poincar\'{e}-Hopf theorem for hybrid systems different from ours was presented in \cite[Cor.~4]{ames2005homology}. 

From a more abstract point of view, our generalization (Theorem~\ref{th:poinc-hopf-hybrid}) is a relative Poincar\'{e}-Hopf theorem for vector fields on pairs $(\st,\gd)$ satisfying certain properties. 
A well-known extension of the classical Poincar\'{e}-Hopf theorem is to manifolds $M$ with boundary $\partial M$ subject to the constraint that $\dft|_{\partial M}$ is outward-pointing \cite{milnor1965topology}; Pugh removed the outward-pointing restriction for a generic class of vector fields by relating the index sum to a sum of Euler characteristics \cite{pugh1968generalized}.
The latter two results can also be viewed as relative Poincar\'{e}-Hopf theorems for the pair $(M,\partial M)$.
However, the present paper is motivated by the observation that the pairs $(\st,\gd)$ in hybrid systems models arising in applications typically do not enjoy the same smoothness properties; they often have corners or more complicated singularities (cf. \cite[Sec.~2.8]{cgks}).

\section{Main result}

\subsection{Hybrid systems}
In this paper we consider \concept{hybrid systems} $H= (\st, \fs, \gd, \vp, r)$ to be defined by a topological \concept{state space} $\st$, a closed \concept{guard set} $\gd\subset \st$, an open \concept{flow set} $\fs = \st \setminus \gd$, a continuous \emph{local semiflow} $\vp$ on $\fs$ (\S \ref{sec:prelim-standard}), and an arbitrary multivalued \concept{reset} map $r\colon \gd \rightrightarrows \st$ (a set-valued map $r\colon \gd \to 2^{\st}$).
The results of this paper apply to a broad subclass of such systems.
The notation just introduced follows that of \cite{kvalheim2021conley}, but the setup just described is (mostly) more general.
General hybrid systems have both discrete-time and continuous-time behavior; this is captured in typical definitions of \emph{executions} of hybrid systems (Fig.~\ref{fig:execution}).
In the present paper we will not need a formal definition of ``execution''; see \cite[Def.~2.3]{kvalheim2021conley} for one example.

In the above setup there appears to be only a single guard $\gd$, reset $r$, and state space $\st$; however, hybrid systems in the literature are commonly defined using several ``modes'', guards, and resets (and sometimes emphasizing an underlying directed graph structure).
Such systems can be reduced to the above setup by defining $\st$, $\gd$, $\fs$, $\vp$, and $r$ to be the respective unions of all modes, guards, flow sets, local semiflows, and resets. 

\begin{figure}
	\centering
	\def\svgwidth{1.0\columnwidth}
\begingroup%
  \makeatletter%
  \providecommand\color[2][]{%
    \errmessage{(Inkscape) Color is used for the text in Inkscape, but the package 'color.sty' is not loaded}%
    \renewcommand\color[2][]{}%
  }%
  \providecommand\transparent[1]{%
    \errmessage{(Inkscape) Transparency is used (non-zero) for the text in Inkscape, but the package 'transparent.sty' is not loaded}%
    \renewcommand\transparent[1]{}%
  }%
  \providecommand\rotatebox[2]{#2}%
  \newcommand*\fsize{\dimexpr\f@size pt\relax}%
  \newcommand*\lineheight[1]{\fontsize{\fsize}{#1\fsize}\selectfont}%
  \ifx\svgwidth\undefined%
    \setlength{\unitlength}{422.77340183bp}%
    \ifx\svgscale\undefined%
      \relax%
    \else%
      \setlength{\unitlength}{\unitlength * \real{\svgscale}}%
    \fi%
  \else%
    \setlength{\unitlength}{\svgwidth}%
  \fi%
  \global\let\svgwidth\undefined%
  \global\let\svgscale\undefined%
  \makeatother%
  \begin{picture}(1,0.37027798)%
    \lineheight{1}%
    \setlength\tabcolsep{0pt}%
    \put(0,0){\includegraphics[width=\unitlength,page=1]{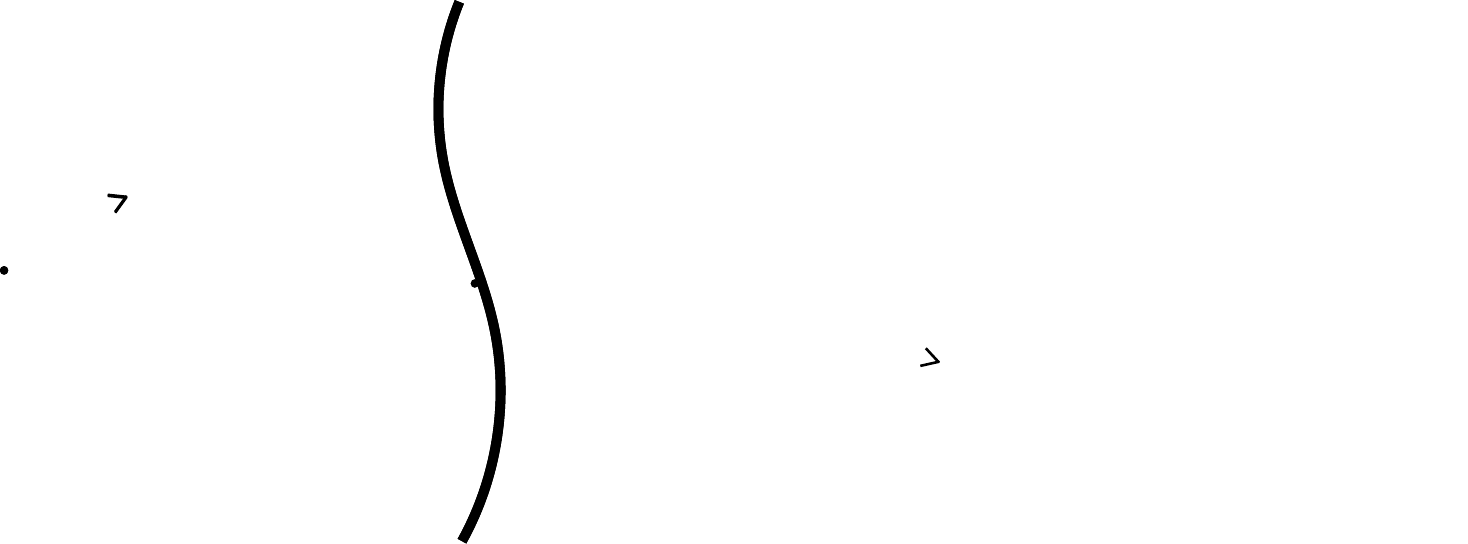}}%
    \put(0.26185865,0.33203108){\color[rgb]{0,0,0}\makebox(0,0)[lt]{\lineheight{1.25}\smash{\begin{tabular}[t]{l}$\gd$\end{tabular}}}}%
    \put(0,0){\includegraphics[width=\unitlength,page=2]{execution-2.pdf}}%
    \put(0.3172859,0.27526249){\color[rgb]{0,0,0}\makebox(0,0)[lt]{\lineheight{1.25}\smash{\begin{tabular}[t]{l}$y\in r(x)$\end{tabular}}}}%
    \put(0,0){\includegraphics[width=\unitlength,page=3]{execution-2.pdf}}%
    \put(0.38469786,0.01271047){\color[rgb]{0,0,0}\makebox(0,0)[lt]{\lineheight{1.25}\smash{\begin{tabular}[t]{l}$z\in r(y)$\end{tabular}}}}%
    \put(0.34921788,0.17946647){\color[rgb]{0,0,0}\makebox(0,0)[lt]{\lineheight{1.25}\smash{\begin{tabular}[t]{l}$x$\end{tabular}}}}%
  \end{picture}%
\endgroup%

	\caption{An execution of a hybrid system $H=(\st,\fs,\gd,\vp,r)$.}\label{fig:execution}
\end{figure}

\subsection{Motivation: stationary point separation principle}
The structure of the invariant sets of a general hybrid system $H = (\st,\fs,\gd,\vp,r)$ depend on $r$ and $\vp$ in a fully coupled (and nonlinear) way.
Hence results involving such invariant sets must take $r$ and $\vp$ into simultaneous consideration in general.
Since stationary points are invariant sets, one might expect that the preceding sentence applies to them.

This is not the case, however, as revealed by the following (self-evident)
\concept{stationary point separation principle (SPSP)} for hybrid systems: the stationary points for $H$ are the disjoint union of the $\vp$-equilibria with the $r$-fixed points; moreover (once $\st$ and $\gd$ are fixed), the $\vp$-equilibria do not depend in any way on $r$, and the $r$-fixed points do not depend in any way on $\vp$.
In other words, the $H$-stationary points separate into two sets which can be analyzed independently.

One might expect the structure of all $H$-stationary points to be strongly constrained by the topology of spaces associated to $H$ such as the hybrifold \cite{simic2000towards,simic2005towards} or hybrid suspension \cite[Fig.~6]{kvalheim2021conley} (or ``homotopy colimit'' \cite{ames2005homology}, or ``$1$-relaxed hybrid quotient space'' \cite{burden2015metrization}).
However, the SPSP implies that the $\vp$-equilibria cannot be constrained by any properties of these spaces depending solely on $r$.

In summary, the reset $r$ is irrelevant to our main (Theorem~\ref{th:poinc-hopf-hybrid}) and subsequent results.
Thus, we formulate these results without any mention of $r$.
\subsection{Statement of results}
The following is our main result; it can be applied to hybrid systems $H = (\st,\fs,\gd,\vp,r)$ satisfying its hypotheses.
In this result and elsewhere in the paper, we follow \cite{tu2010intro,tu2017math-se} by not requiring that the connected components of smooth manifolds with corners have constant dimension.
The standard definitions of local semiflows, the Euler characteristic $\chi(\st)$, the index $\ind_{(-\dft)}(z)$, and (weak) local contractibility are recalled in \S \ref{sec:prelim-standard}; we introduce the \emph{guard index} $\gi_{\dft}(\gd)$ in \S\ref{sec:prelim-guard-index} (Def.~\ref{def:guard-index}).

\begin{Th}\label{th:poinc-hopf-hybrid}
Let $\gd\subset \st$ be compact subspaces of a smooth manifold $M$ with corners and $\vp$ be a continuous local semiflow on $\fs\coloneqq \st\setminus \gd$ satisfying $\frac{\partial}{\partial t} \vp |_{t=0} = \dft|_{\fs}$ for some locally Lipschitz vector field $\dft\colon \st\to \T M$ over $\st$.
Assume that $\st$ is locally contractible and that $(\dft|_\fs)^{-1}(0)$ is a finite set contained in the interior of $\st\setminus \partial M$ in $M\setminus \partial M$.
Then the singular homology $\Hom_{\bullet}(\st)$  is finitely generated and
\begin{equation}\label{eq:th-poinc-hopf-hybrid}
\sum_{z\in \fs \cap  \dft^{-1}(0)}  \ind_{(-\dft)}(z) = \chi(\st)-\gi_{\dft}(\gd).
\end{equation}
\end{Th}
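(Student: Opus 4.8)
\emph{Proof strategy.}
The plan is to prove a relative Poincar\'e--Hopf identity expressing the left-hand side of \eqref{eq:th-poinc-hopf-hybrid} as $\chi(\st)-\chi(\gd^-)$, where $\gd^-\subseteq\gd$ is the ``outgoing'' part of the guard through which $\vp$ leaves $\fs$, and then to recognize $\chi(\gd^-)$ as the guard index $\gi_\dft(\gd)$. First the homological claim: being a compact, finite-dimensional (as a subspace of $M$), locally contractible metric space, $\st$ is a compact ANR, hence has the homotopy type of a finite CW complex; thus $\Hom_\bullet(\st)$ is finitely generated, $\chi(\st)$ is well defined, and on $\st$ and the compact subsets below singular cohomology agrees with \CASc. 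I would carry out the dynamical bookkeeping in \CASc, which behaves well under the continuity and excision arguments needed to handle the non-smoothness of $\gd$.

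Next I would localize. Since $\fs\cap\dft^{-1}(0)$ is finite and lies in the interior of $\st\setminus\partial M$, each zero $z$ sits in an open subset of $M$ of some dimension $n$ on which $\dft$ is an honest Lipschitz vector field, and I may pick pairwise-disjoint compact isolating neighborhoods $B_z\subseteq\fs$ in which $z$ is the only rest point of $\vp$. With $B_z^-$ the exit set of $B_z$ under the forward flow $\vp$, the local Poincar\'e--Hopf identity---equivalently, the equality of $\ind_{(-\dft)}(z)$ with the Euler characteristic of the Conley index of $z$---gives $\ind_{(-\dft)}(z)=\chi(B_z)-\chi(B_z^-)$; for a hyperbolic $z$ with $u$ unstable directions one has $B_z\simeq D^{\,n-u}\times D^{\,u}$ and $B_z^-\simeq D^{\,n-u}\times\partial D^{\,u}$, and both sides equal $(-1)^{u}$. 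This is also the reason $-\dft$, rather than $\dft$, appears. Summing, $\sum_z\ind_{(-\dft)}(z)=\sum_z\bigl(\chi(B_z)-\chi(B_z^-)\bigr)$.

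The global step is the core of the argument. Using the semiflow $\vp$ I would relate $\st$, the blocks $B_z$, and the guard by realizing $\st$ as an isolating block whose exit set is carried by $\vp$ onto the union of the local exit sets $B_z^-$ with an outgoing guard set $\gd^-\subseteq\gd$. Additivity of the Euler characteristic for \CASc\ of compact ANRs (via Mayer--Vietoris) then collapses $\sum_z(\chi(B_z)-\chi(B_z^-))$ to $\chi(\st)-\chi(\gd^-)$. It remains to check, directly from Definition~\ref{def:guard-index}, that $\chi(\gd^-)=\gi_\dft(\gd)$: the guard index should be exactly the \CASc\ Euler characteristic of the set through which $\vp$ exits $\fs$.

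I expect the construction of $\gd^-$ and its exit structure to be the main obstacle. Because $\gd$ need not be smooth and $\vp$ is undefined on $\gd$ itself, the classical devices---a boundary collar, transversality of $\dft$ to the boundary, and smooth isolating blocks---are unavailable, so the outgoing set must be built purely from $\vp$ and $\dft$ on $\fs$ and shown to have a well-defined Euler characteristic independent of all choices; this is where local contractibility and the continuity of \CASc\ replace the smooth-boundary hypotheses underlying the classical Morse and Pugh theorems. A secondary point is that recurrence other than the zeros (e.g.\ periodic orbits) must not corrupt the count, which the exit-set (Conley-index) formulation accommodates because such invariant sets contribute Euler characteristic zero.
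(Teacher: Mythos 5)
Your route---a Pugh-style decomposition with local Conley blocks $B_z$ at the zeros, a global ``outgoing'' guard set $\gd^-$, and Euler-characteristic additivity---is genuinely different from the paper's proof, which never constructs any exit set. The paper multiplies $\dft$ by a nonnegative $\alpha\in\Lip(\st)$ vanishing exactly on $\gd$, so that $\alpha\dft$ generates a global semiflow $\Phi_\alpha$ on all of $\st$ whose fixed point set, for small $\tau>0$, is exactly $\gd\cup(\dft|_\fs)^{-1}(0)$ (Yorke's minimal-period bound rules out short periodic orbits); it then applies the Lefschetz fixed point theorem to $\Phi_\alpha^\tau\simeq\id_\st$ to get total fixed point index $\chi(\st)$, and splits this by additivity of Dold's index into the contribution near $\gd$---which \emph{is} $\gi_\dft(\gd)$, by Definition~\ref{def:guard-index}---plus the contributions near each zero, each identified with $\ind_{(-\dft)}(z)$ via an explicit homotopy between $-\dft/\norm{\dft}$ and the normalized displacement map of $\Phi_\alpha^\tau$. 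This difference is not cosmetic, because the step where your proposal breaks is exactly the one this rescaling trick is designed to avoid.

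The gap is your final step: the claim that $\chi(\gd^-)=\gi_\dft(\gd)$ follows ``directly from Definition~\ref{def:guard-index}.'' It does not. Definition~\ref{def:guard-index} defines $\gi_\dft(\gd)$ as a fixed point index---an integer that exists for \emph{any} compact guard---and equating a fixed point index with an Euler characteristic is a Lefschetz-type theorem, not an unwinding of a definition; indeed it is precisely the content of the paper's Theorem~\ref{th:poinc-hopf-inflowing}, which requires the additional hypotheses that $\gd$ be inflowing and locally contractible (or the use of \v{C}ech-Alexander-Spanier cohomology) and a substantive argument involving Lyapunov sublevel sets, Wazewski's theorem, and the continuity property of \v{C}ech cohomology. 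Worse, in the generality of Theorem~\ref{th:poinc-hopf-hybrid} your identity cannot even be stated: take $\st$ the closed unit disk in $M=\R^2$, $\dft\equiv(1,0)$, and $\gd$ the union of the boundary circle with the compact set $\{(1/n,0):n\geq 1\}\cup\{(0,0)\}$. All hypotheses hold ($\dft$ has no zeros), and the theorem forces $\gi_\dft(\gd)=\chi(\st)=1$; but any reasonable exit set $\gd^-$ contains the infinitely many isolated points $(1/n,0)$, so its cohomology (singular or \v{C}ech) is not finitely generated and $\chi(\gd^-)$ is undefined, while the fixed point index remains a perfectly robust integer. Two secondary problems: a zero that is not an isolated invariant set (e.g.\ a planar center surrounded by periodic orbits) admits no isolating block $B_z$, so your local identity $\ind_{(-\dft)}(z)=\chi(B_z)-\chi(B_z^-)$ is unavailable for some zeros the theorem allows (the paper's homotopy argument covers them); and the global step of realizing $\st$ as an isolating block with exit set carried onto $\gd^-$ is exactly what the non-smoothness of $\gd$ obstructs---you correctly flag this as the main obstacle, but the proposal offers no construction, whereas the paper's $\alpha$-rescaling eliminates the need for one entirely.
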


As shown in \cite[Prop.~4.1, 4.3; Cor.~4.4]{kvalheim2021conley}, many hybrid systems appearing in the literature satisfy the \emph{trapping guard} condition introduced in \cite[Def.~3.1]{kvalheim2021conley}.
In the context of Theorem~\ref{th:poinc-hopf-hybrid}, this condition implies the weaker \emph{inflowing} condition on $\gd$ introduced in \S\ref{sec:prelim-guard-index} (Def.~\ref{def:guard-inflowing}), which is easy to check in many examples.
Its satisfaction results in the following simplication of  Theorem~\ref{th:poinc-hopf-hybrid}.

\begin{Th}\label{th:poinc-hopf-inflowing}
Assume the hypotheses of Theorem~\ref{th:poinc-hopf-hybrid}.
Further assume that $\gd$ is locally contractible and inflowing.
Then the singular homologies $\Hom_{\bullet}(\st), \Hom_{\bullet}(\gd)$ are finitely generated and
\begin{equation}\label{eq:th-poinc-hopf-inflowing}
\sum_{z\in \fs \cap  \dft^{-1}(0)}  \ind_{(-\dft)}(z) = \chi(\st)-\chi(\gd).
\end{equation}
\end{Th}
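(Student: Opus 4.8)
The plan is to obtain Theorem~\ref{th:poinc-hopf-inflowing} as a direct corollary of Theorem~\ref{th:poinc-hopf-hybrid}: once one shows that the two extra hypotheses force the guard index to agree with an ordinary Euler characteristic, namely
\begin{equation*}
\gi_{\dft}(\gd)=\chi(\gd),
\end{equation*}
substituting this identity into \eqref{eq:th-poinc-hopf-hybrid} immediately yields \eqref{eq:th-poinc-hopf-inflowing}. Thus essentially all of the content lies in the single identity $\gi_{\dft}(\gd)=\chi(\gd)$, together with the assertion that $\Hom_{\bullet}(\gd)$ is finitely generated (finite generation of $\Hom_{\bullet}(\st)$ being already supplied by Theorem~\ref{th:poinc-hopf-hybrid}).

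First I would dispose of the finite-generation and well-definedness issues. The space $\gd$ is compact, locally contractible by hypothesis, and, being embedded in the finite-dimensional manifold $M$, finite-dimensional. A compact, finite-dimensional, locally contractible metrizable space is an ANR, hence has the homotopy type of a finite CW complex; consequently $\Hom_{\bullet}(\gd)$ is finitely generated and $\chi(\gd)$ is well defined. The same local contractibility is exactly what lets me pass between the \CASc\ entering the definition of $\gi_{\dft}(\gd)$ and ordinary singular (co)homology, so that the guard index can ultimately be expressed through the usual invariants of $\gd$.

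The central step is the identity $\gi_{\dft}(\gd)=\chi(\gd)$. Unwinding Def.~\ref{def:guard-index}, $\gi_{\dft}(\gd)$ is built from the \CASc\ of $\gd$ together with the exit data $\exc$ determined by the behavior of $\dft$ along $\gd$. The inflowing condition (Def.~\ref{def:guard-inflowing}) asserts precisely that, along all of $\gd$, trajectories of $\vp$ enter the guard; equivalently $-\dft$ drives no trajectory out of $\gd$, so the exit set governing $\gi_{\dft}$ becomes trivial. I would verify directly from the definitions that under the inflowing hypothesis this exit data degenerates, collapsing the guard index to the (Čech, hence by the previous paragraph singular) Euler characteristic of the full guard, i.e.\ $\gi_{\dft}(\gd)=\chi(\gd)$. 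This is the hybrid, non-smooth analogue of the classical Morse formula in which an everywhere inward-pointing field on the boundary contributes the term $\chi(\gd)$ to the index count.

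The main obstacle will be carrying out this last verification with no smoothness available on $\gd$. Because $\gd$ may carry corners or worse singularities, one cannot invoke the boundary version of the Poincar\'e--Hopf/Morse theorem to identify the correction term; instead one must argue entirely from the limiting, \CASc-based construction of $\gi_{\dft}$. The delicate point is to confirm that the triviality of $\exc$ produced by the inflowing condition persists through the neighborhood and inverse-limit operations used to define $\gi_{\dft}(\gd)$, so that no residual contribution survives, and then to invoke the local contractibility of $\gd$ to replace the resulting Čech quantity by the ordinary Euler characteristic $\chi(\gd)$. Managing this interplay between the inflowing hypothesis, the limit construction, and the Čech--singular comparison is where the real work resides.
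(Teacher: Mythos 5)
Your overall reduction is the same as the paper's: finite generation of $\Hom_{\bullet}(\gd)$ plus the single identity $\gi_{\dft}(\gd)=\chi(\gd)$, substituted into \eqref{eq:th-poinc-hopf-hybrid}. The finite-generation argument (compact, finite-dimensional, locally contractible, metrizable $\Rightarrow$ ANR/ENR) is fine and matches the paper in spirit. The problem is the central identity: your argument for it rests on a mischaracterization of Def.~\ref{def:guard-index}. The guard index is \emph{not} built from the \CASc\ of $\gd$ together with ``exit data $\exc$'' via neighborhood or inverse-limit operations --- no such objects appear anywhere in its construction. It is defined as the fixed point index (in the sense of Dold) of the time-$\tau$ map $\Phi_{\alpha}^{\tau}$ of the semiflow generated by the rescaled field $\alpha\dft$, restricted to an open neighborhood $U$ of $\gd$ avoiding the other zeros. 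Likewise, ``inflowing'' (Def.~\ref{def:guard-inflowing}) is not a pointwise inward-pointing condition on $\dft$ along $\gd$ (which would not even make sense when $\gd$ has empty interior); it says that $\gd$ is asymptotically stable for $\Phi_{\alpha}$. Since your ``collapse of the exit set'' step operates on objects that do not occur in the actual definition, the key identity $\gi_{\dft}(\gd)=\chi(\gd)$ is left unproved --- as you essentially concede when you defer ``the real work'' to managing a limit construction that does not exist.

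To close the gap one needs three concrete steps (this is the paper's route): (i) use asymptotic stability to produce a \emph{compact ENR} neighborhood $V\subset\st$ of $\gd$ that is positively invariant in the strong sense $\Phi_{\alpha}^{t}(V)\subset\interior(V)$ for $t>0$ and contains no zeros of $(\alpha\dft)^{-1}(0)\setminus\gd$; this comes from a compact sublevel set of a Lyapunov function for the attractor $\gd$, with Wazewski's theorem supplying the retraction showing $V$ is an ENR. (ii) Apply the Lefschetz fixed point theorem to the self-map $\Phi_{\alpha}^{\tau}|_{V}\colon V\to V$, which is homotopic to $\id_V$ through the semiflow: together with Yorke's bound excluding nonstationary periodic orbits of small period, this gives that the fixed point index of $\Phi_{\alpha}^{\tau}|_{V}$ equals $\chi(V)$, and by Def.~\ref{def:guard-index} that index is $\gi_{\dft}(\gd)$. (iii) Prove $\chi(V)=\chi(\gd)$; this is genuinely nontrivial and is where \v{C}ech theory actually enters --- not through the definition of the guard index, but because an attractor and a suitable neighborhood of it in its basin have isomorphic real \CASc\ (a shape-theoretic fact), while the ENR/local-contractibility hypotheses let one identify \v{C}ech with singular invariants for both $V$ and $\gd$. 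Step (iii) in particular cannot be waved away: $V$ need not deformation retract onto $\gd$ in the ordinary sense, which is precisely why the paper invokes shape theory rather than the ``trivial exit set'' heuristic you propose.
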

\begin{Rem}
The assumption that $\gd$ is locally contractible can be removed if \v{C}ech-Alexander-Spanier cohomology \cite{spanier1966algebraic,massey1978homology,massey1991basic} is used instead of singular homology to define $\chi(\gd)$.
This follows from the proof of Theorem~\ref{th:poinc-hopf-inflowing}. 
\end{Rem}

\section{Preliminaries}\label{sec:prelim}
In this section we collect preliminaries used in the formulations of Theorem~\ref{th:poinc-hopf-hybrid} and \ref{th:poinc-hopf-inflowing}.

\subsection{Standard notions}\label{sec:prelim-standard}
Let $X$ be a topological space for which the singular homology $\Hom_{\bullet}(X)$ is finitely generated.
We define the \concept{Euler characteristic } $\chi(X)$ of $X$ via (cf. \cite[Thm~2.44]{hatcher2002algebraic})
\begin{equation}\label{eq:euler-cech-def}
\chi(X)\coloneqq \sum_{i} (-1)^i\textnormal{ rank} \Hom_i(X).
\end{equation}

Given a topological space $X$ and $x\in X$, we say that $X$ is \concept{locally contractible at $x$}  if for each neighborhood $U$ of $x$ in $X$ there is a neighborhood $V\subset U$ of $x$ such that the inclusion $V\hookrightarrow U$ is nullhomotopic.
We say that $X$ is \concept{locally contractible} if $X$ is locally contractible at every $x\in X$ (cf. \cite[p.~525]{hatcher2002algebraic}).

Following \cite[Sec.~1.3]{hirsch2006monotone}, a \concept{local semiflow} $\varphi$ on a topological space $\fs$ is a map $\varphi\colon \dom(\varphi) \to \fs$ defined on an open neighborhood $\dom(\varphi) \subseteq [0, \infty) \times \fs$ of $\{0\}\times \fs$ satisfying the following conditions, with $\varphi^t\coloneqq \varphi(t,\slot)$ and $t,s \in [0,\infty)$: (i) $\varphi^0 = \id_\fs$, (ii) $(t+s,x)\in \dom(\varphi) \iff$ both $(s,x)\in \dom(\varphi)$ and  $(t,\varphi^s(x))\in \dom(\varphi)$, and (iii) for all $(t+s,x)\in \dom(\varphi)$, $\varphi^{t+s}(x) = \varphi^t(\varphi^s(x))$.
The local semiflow $\varphi$ is a \concept{semiflow} if $\dom(\varphi) = [0,\infty) \times \fs$.

Let $M$ be a smooth manifold with corners; recall that we do not require the connected components of $M$ to have constant dimension.
Given a continuous vector field $\dft$ on $M$ and a zero $z\in \dft^{-1}(0)\setminus \partial M$ contained in the manifold interior of $M$, the following definition of the (Hopf) \concept{index} follows \cite{milnor1965topology} but (i) works with merely continuous rather than smooth vector fields and (ii) adopts a special definition for the case that $z$ is an isolated point of $M$.
If $\{z\}$ is open in $M$ then the index of $\dft$ at $z$ is defined to be $\ind_{\dft}(z)\coloneqq +1$.
If $\{z\}$ is not open in $M$ then $z$ is contained in a local coordinate chart mapping $z$ to $0\in \R^n$ with $n> 0$, and in local coordinates the function
$$x\mapsto \frac{\dft(x)}{\norm{\dft(x)}}$$
maps a small sphere centered at $z$ into the unit sphere in $\R^n$.
In this case, $\ind_{\dft}(z)$ is defined to be\footnote{If in local coordinates the Jacobian matrix of $\dft$ at $z$ is invertible, then $\ind_{\dft}(z)\in \{-1,+1\}$ is equal to the sign of its determinant.} the degree \cite[p.~134]{hatcher2002algebraic} of this map; it can be shown that $\ind_{\dft}(z)$ is well-defined independently of the choices made.

\subsection{Guard index and inflowing guards}\label{sec:prelim-guard-index}
Throughout this section $\gd\subset \st$ are compact subspaces of a smooth manifold $M$ with corners, and $\vp$ is a continuous local semiflow on $\fs\coloneqq \st\setminus \gd$ satisfying $\frac{\partial}{\partial t} \vp |_{t=0} = \dft|_{\fs}$ for some locally Lipschitz vector field $\dft\colon \st\to \T M$ over $\st$. 
By definition, this means that $\dft$ admits an extension to a locally Lipschitz vector field defined on an open subset of $M$.\footnote{\emph{Local} Lipschitzness of a map between smooth manifolds with corners is a well-defined, metric-independent notion which depends only on the smooth structures of the manifolds involved (cf. \cite[Rem.~1]{kvalheim2021existence}).}
We also assume that $\st$ is locally contractible throughout this section.
We use the notation $\Lip(\st)$ for the set of locally Lipschitz functions $\st\to \R$ on $\st$.
Since $\st$ is compact, every $\alpha\in \Lip(\st)$ is globally Lipschitz.

\begin{Lem}\label{lem:guard-index-indep}
Assume that the closure of $(\dft|_\fs)^{-1}(0)$ is disjoint from $\gd$.
Then for any two nonnegative functions $\alpha,\beta\in \Lip(\st)$ vanishing precisely on $\gd$, there are unique continuous semiflows $\Phi_{\alpha},\Phi_\beta \colon [0,\infty)\times \st \to \st$ satisfying $\frac{\partial}{\partial t}\Phi_{\alpha}|_{t=0}= \alpha\dft, \frac{\partial}{\partial t}\Phi_{\beta}|_{t=0}= \beta\dft$ and a number $\epsilon > 0$ such that, for all $\tau\in (0,\epsilon)$, the fixed point indices \cite{dold1965fixed} of $\Phi_{\alpha}^\tau$ and $\Phi_{\beta}^\tau$ restricted to all open neighborhoods $U\subset \st$ of $\gd$ disjoint from $(\dft|_{\fs})^{-1}(0)$ are equal and independent of the neighborhood.
\end{Lem}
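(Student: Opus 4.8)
The plan is to build the two semiflows as time-reparametrizations of the $\dft$-flow, to pin down the fixed-point set of $\Phi_\alpha^\tau$ near $\gd$ by bounding periods of orbits from below, and then to extract all three assertions (independence of $U$, of $\tau$, and of the choice $\alpha$ versus $\beta$) from the excision and homotopy-invariance properties of the Dold fixed-point index \cite{dold1965fixed}. Throughout I would use that $\st$, being a compact, finite-dimensional, locally contractible subspace of $M$, is a compact ENR, so that the index of a continuous self-map on an open subset with compact fixed-point set is available.

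First I would construct $\Phi_\alpha$ (the case of $\Phi_\beta$ being identical). The field $\alpha\dft$ is locally Lipschitz on $\st$ and vanishes exactly on $\gd$, so $\gd$ consists of equilibria, while on $\fs$ the field $\alpha\dft$ is a strictly positive rescaling of $\dft|_\fs$ and therefore has the same oriented orbits as $\vp$. Local existence and uniqueness of integral curves on a manifold with corners (cf.\ \cite{kvalheim2021existence}) give a unique maximal local semiflow. For completeness I would show every forward orbit stays in the compact set $\st$: points of $\gd$ are fixed; for $x\in\fs$ the orbit coincides, up to time change, with a $\vp$-orbit, and since $\vp$ maps into $\fs$ the set $\fs$ is forward invariant, so the only way to leave $\fs$ within $\st$ is to reach its $\st$-boundary, which lies in $\gd$. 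This is excluded by the slow-down estimate $\norm{\alpha\dft(y)}\le K\,d(y,\gd)$ (from Lipschitzness of $\alpha$, which vanishes on $\gd$, and boundedness of $\dft$ on $\st$), which via a Dini--Gronwall inequality forces $d(\Phi_\alpha^t(x),\gd)\ge d(x,\gd)e^{-Kt}>0$ for all finite $t$. Hence orbits remain in $\fs\subset\st$ for all forward time, and compactness upgrades the local semiflow to a complete semiflow $\Phi_\alpha\colon[0,\infty)\times\st\to\st$.

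The central step is to determine $\mathrm{Fix}(\Phi_\alpha^\tau)$ for small $\tau$. Its fixed points are the equilibria of $\alpha\dft$, namely $\gd\sqcup(\dft|_\fs)^{-1}(0)$, together with the periodic points of period at most $\tau$. To eliminate short periodic orbits I would invoke a lower bound on periods in terms of the global Lipschitz constant $K$ of $\alpha\dft$ (finite, since $\st$ is compact): after a Lipschitz embedding of a neighborhood of $\st$ into a Euclidean space and a Lipschitz extension of the field, Yorke's theorem bounds the period of every nonconstant periodic orbit below by $2\pi/K$. Choosing $\epsilon<2\pi/K$ then gives $\mathrm{Fix}(\Phi_\alpha^\tau)=\gd\sqcup(\dft|_\fs)^{-1}(0)$ for all $\tau\in(0,\epsilon)$, so for every neighborhood $U$ of $\gd$ disjoint from $(\dft|_\fs)^{-1}(0)$ one has $\mathrm{Fix}(\Phi_\alpha^\tau)\cap U=\gd$; as $\gd$ is compact and contained in the open set $U$, the index $i(\Phi_\alpha^\tau,U)$ is well-defined. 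I expect this period bound, and in particular its clean transfer from $\R^n$ to the manifold-with-corners setting, to be the main obstacle; everything downstream is formal.

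The three independence claims then follow, using that $\epsilon$ is chosen uniformly. For independence of $U$: given valid $U_1,U_2$, the fixed-point set in each equals $\gd\subseteq U_1\cap U_2$, so excision yields $i(\Phi_\alpha^\tau,U_1)=i(\Phi_\alpha^\tau,U_1\cap U_2)=i(\Phi_\alpha^\tau,U_2)$. For independence of $\tau$: the assignment $(\tau,x)\mapsto\Phi_\alpha^\tau(x)$ is continuous and, by the previous paragraph, carries no fixed point across $\partial U$ for $\tau\in(0,\epsilon)$, so homotopy invariance makes $i(\Phi_\alpha^\tau,U)$ constant in $\tau$. Finally, for equality of the indices of $\Phi_\alpha^\tau$ and $\Phi_\beta^\tau$: the convex combinations $\alpha_s\coloneqq(1-s)\alpha+s\beta$ are nonnegative locally Lipschitz functions vanishing precisely on $\gd$, with Lipschitz constants uniformly bounded over $s\in[0,1]$, so the construction above yields complete semiflows $\Phi_s$ depending continuously on $s$ (continuous dependence on the field) for which a single $\epsilon$ suffices. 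Since $\mathrm{Fix}(\Phi_s^\tau)\cap U=\gd$ for every $s$, the homotopy $(s,x)\mapsto\Phi_s^\tau(x)$ keeps fixed points off $\partial U$, and homotopy invariance gives $i(\Phi_\alpha^\tau,U)=i(\Phi_\beta^\tau,U)$. This common value is the guard index $\gi_{\dft}(\gd)$.
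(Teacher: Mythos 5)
Your proposal is correct and follows essentially the same route as the paper's proof: construct the semiflows via Lipschitz extension and compactness of $\st$, identify the fixed-point set of the time-$\tau$ maps with $(\alpha\dft)^{-1}(0)$ using Yorke's lower bound on periods of nonstationary periodic orbits, and conclude by Dold's excision and homotopy-invariance properties applied to the linear interpolation $[(1-s)\alpha+s\beta]\dft$, which has constant zero set. Your extra details (the Gronwall slow-down estimate giving completeness of $\Phi_{\alpha}$, and the explicit homotopy establishing $\tau$-independence) are finer-grained than the paper's terser treatment but do not constitute a different method.
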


\begin{proof}
Existence and uniqueness of the mentioned semiflows follows from compactness of $\st$, the fact that $\vp$ is a local semiflow, and the Picard-Lindel\"{o}f theorem (after embedding $M$ in some $\R^N$ and extending $\dft$ to a Lipschitz vector field on $\R^N$ using the McShane extension theorem \cite[Thm~1.33]{weaver2018lipschitz}).
Since the closure of $(\dft|_\fs)^{-1}(0)$ is disjoint from $\gd$ it follows that $\gd$ is a connected component of $(\alpha\dft)^{-1}(0) = (\beta\dft)^{-1}(0)$.
The set $(\alpha\dft)^{-1}(0) = (\beta\dft)^{-1}(0)$ is also the fixed point set of both $\Phi_{\alpha}^\tau$ and $\Phi_{\beta}^\tau$ for all sufficiently small $\tau>0$ because the fact that $\alpha\dft$ and $\beta\dft$ are Lipschitz implies that their nonstationary periodic orbits cannot have arbitrarily small periods \cite{yorke1969periods}.  
Thus, there is an open neighborhood $U\subset \st$ of $\gd$ disjoint from $\dft^{-1}(0)\setminus \gd = (\dft|_\fs)^{-1}(0)$, so the fixed point indices of $\Phi_{\alpha}^\tau|_U$ and $\Phi_{\beta}^\tau|_U$ are well-defined.
But $\Phi_{\alpha}^\tau|_U$ and $\Phi_{\beta}^\tau|_U$ are homotopic through maps with constant fixed point set if $\tau$ is small enough since the homotopy $t\mapsto [\alpha + t(\beta-\alpha)] \dft$ from $\alpha \dft$ to $\beta \dft$ has constant zero set (again using \cite{yorke1969periods}).  
By \cite[pp.~2--3]{dold1965fixed} it follows that the fixed point indices of $\Phi_{\alpha}^\tau|_U$ and $\Phi_{\beta}^\tau|_U$ agree, and \cite[Prop.~1.3]{dold1965fixed} implies that these indices are independent of the neighborhood $U\subset \st$ of $\gd$ disjoint from $\dft^{-1}(0)\setminus \gd$.
\end{proof}

\begin{Def}\label{def:guard-index}
Assume that the closure of $(\dft|_\fs)^{-1}(0)$ is disjoint from $\gd$.
Let $\alpha\in \Lip(\st)$ be a nonnegative function vanishing precisely on $\gd$, $\Phi_\alpha\colon [0,\infty)\times \st\to \st$ be the unique continuous semiflow satisfying $\frac{\partial}{\partial t}\Phi_{\alpha}|_{t=0}= \alpha\dft$ of Lem.~\ref{lem:guard-index-indep}, and $U\subset \st$ be an open neighborhood of $\gd$ disjoint from $(\dft|_{\fs})^{-1}(0)$.
Then the \concept{guard index} $\Xi_{\dft}(\gd)\in \Z$ is defined to be the fixed point index \cite{dold1965fixed} of $\Phi_{\alpha}^\tau|_U$ for any sufficiently small $\tau> 0$.
Lem.~\ref{lem:guard-index-indep} implies that $\Xi_{\dft}(\gd)\in \Z$ is independent of the choices made.
\end{Def}

\begin{Def}\label{def:guard-inflowing}
Assume that the closure of $(\dft|_\fs)^{-1}(0)$ is disjoint from $\gd$.
Let $\alpha\in \Lip(\st)$ be a nonnegative function vanishing precisely on $\gd$ and $\Phi_\alpha\colon [0,\infty)\times \st\to \st$ be the unique continuous semiflow satisfying $\frac{\partial}{\partial t}\Phi_{\alpha}|_{t=0}= \alpha\dft$ of Lem.~\ref{lem:guard-index-indep}.
We say that $\gd$ is \concept{inflowing} for $\dft$ if $\gd$ is asymptotically stable for $\Phi_\alpha$. 
This does not depend on the choice of $\alpha$. 
\end{Def}

\section{Proofs of Theorems~\ref{th:poinc-hopf-hybrid} and \ref{th:poinc-hopf-inflowing}}

\subsection{Proof of Theorem~\ref{th:poinc-hopf-hybrid}}
Let $\alpha\in \Lip(\st)$, $\Phi_\alpha$ and $U$ be as in Def.~\ref{def:guard-index}.
First, local contractibility and compactness of $\st$ along with embeddability of $M\supset \st$ in some $\R^N$ imply that $\st$ is a compact Euclidean neighborhood retract (ENR) \cite[Thm~A.7]{hatcher2002algebraic}, so its singular homology is finitely generated \cite[Cor.~A.8]{hatcher2002algebraic}.
Next, each time-$\tau$ map $\Phi_\alpha^\tau\colon \st\to \st$ is homotopic to the identity.
Moreover, since $\alpha \dft$ is Lipschitz it follows that the fixed point set of $\Phi_{\alpha}^\tau$ coincides with $(\alpha \dft)^{-1}(0)$ if $\tau >0$ is sufficiently small \cite{yorke1969periods}.

Thus, the Lefschetz fixed point theorem  \cite[Thm~4.1]{dold1965fixed} and the definition of $\gi_{\dft}(\gd)$ imply the following equality for all sufficiently small $\tau > 0$:
\begin{equation}\label{eq:th-general-1}
\gi_{\dft}(\gd) + I_{\Phi_{\alpha}^\tau|_{\fs}} = \chi(\st),
\end{equation}
where $I_{\Phi_{\alpha}^\tau|_{\fs}}$ is the fixed point index \cite{dold1965fixed} of $\Phi_{\alpha}^\tau|_{\fs}\colon \fs\to \fs$.

Fix $z\in (\dft|_\fs)^{-1}(0)$ and let $V\subset \fs$ be the open domain of a coordinate chart for $M\setminus \partial M$ disjoint from the other zeros sending $z$ to $0\in \R^{n_z}$.
Then for all sufficiently small $\tau > 0$, in local coordinates for $V$ the function
$$x\mapsto F^{\tau}(x)\coloneqq \frac{x-\Phi_{\alpha}^\tau(x)}{\norm{x-\Phi_{\alpha}^\tau(x)}}$$
maps a small sphere centered at $z$ into the unit sphere in $\R^{n_z}$, and the degree of this map is equal to the fixed point index of $\Phi_{\alpha}^\tau|_V$.
Moreover,
$$H(t,x)\coloneqq \begin{cases}
\frac{-\dft(x)}{\norm{-\dft(x)}}, & t=0\\
\frac{1}{2t + (1-2t)\alpha(x)} F^{t\tau}(x), & 0 < t \leq 1/2\\
F^{t\tau}(x), & 1/2\leq t \leq 1
\end{cases}$$
is a continuous homotopy from $-\dft/\norm{\dft}$ to $F^\tau$, so the fixed point index of $\Phi_{\alpha}^\tau|_V$ is equal to $\ind_{-\dft}(z)$.
This, \eqref{eq:th-general-1}, and additivity of the fixed point index \cite[pp.~1,~3]{dold1965fixed} imply the desired equality \eqref{eq:th-poinc-hopf-hybrid}.

\subsection{Proof of Theorem~\ref{th:poinc-hopf-inflowing}}

From Theorem~\ref{th:poinc-hopf-hybrid}  it suffices to prove that $\gi_{\dft}(\gd)=\chi(\gd)$ under the additional assumptions that $\gd$ is and locally contractible and inflowing.
Fix any $\alpha$ and corresponding $\Phi_{\alpha}$ as in Def.~\ref{def:guard-inflowing}.
The set $\gd$ is asymptotically stable for $\Phi_{\alpha}$ since $\gd$ is inflowing, so there is a neighborhood $V\subset \st$ of $\gd$ which is a compact Euclidean neighborhood retract (ENR) satisfying $\Phi_{\alpha}^t(V)\subset \interior(V)$ for all $t > 0$ \cite[Prop.~2.4]{wang2016shape} and containing no zeros of $(\alpha\dft)^{-1}(0)\setminus \gd$.\footnote{In more detail, take $V$ to be a compact sublevel set of a continuous Lyapunov function for $\gd$ with respect to $\Phi_\alpha|_{[0,\infty) \times B(\gd)}$ \cite{wang2016shape}, where $B(\gd)$ is the basin of attraction of $\gd$ for $\Phi_\alpha$. 
The set $V$ is a strong deformation retract of $B(\gd)$ since $B(\gd)\setminus \interior(V)$ is a Wazewski set for $\Phi_\alpha|_{[0,\infty) \times B(\gd)}$ \cite[Thm~II.2.3]{conley1978isolated}. 
(Wazewski's theorem is stated in \cite{conley1978isolated} for flows, but exactly the same proof works for semiflows).
In particular, the compact $V$ is a retract of the open ENR $B(\gd)\subset \st$, so $V$ is a compact ENR.}
Thus, the Lefschetz fixed point theorem \cite[Thm~4.1]{dold1965fixed} and \cite{yorke1969periods} imply that $\chi(V)$ is equal to the fixed point index of $\Phi_{\alpha}^\tau|_{V}$ for all sufficiently small $\tau > 0$, and for sufficiently small $\tau > 0$ this fixed point index is equal to $\gi_{\dft}(\gd)$ (by Def.~\ref{def:guard-index}).
Since $V$ is an ENR its real singular homology is isomorphic to its real \CASc~\cite[p.~285, Prop.~6.12]{dold1995lectures}, and the same is true of the ENR $\gd$.
Since the real \v{C}ech-Alexander-Spanier cohomologies of $V$ and $\gd$ are isomorphic \cite[Thm~6.3]{gobbino2001topological}, the desired equality $\chi(\gd) = \chi(V)=\gi_{\dft}(\gd)$ follows.

\section{Conclusion}\label{sec:conclusion}
We have obtained a generalization (Theorem~\ref{th:poinc-hopf-hybrid}) of the classical Poincar\'{e}-Hopf theorem which, along with its inflowing specialization (Theorem~\ref{th:poinc-hopf-inflowing}), are applicable to a very broad class of hybrid systems. 
While Theorem~\ref{th:poinc-hopf-hybrid} generalizes the classical Poincar\'{e}-Hopf theorem for smooth vector fields, it is not a complete generalization of the Poincar\'{e}-Hopf theorem for continuous vector fields because we assume that the vector field is locally Lipschitz.
The reason for this assumption is that we proved Theorem~\ref{th:poinc-hopf-hybrid} by applying the Lefschetz fixed point theorem to the time-$\tau$ maps of a certain semiflow on $\st$.
Using a result of Yorke \cite{yorke1969periods}, the Lipschitz assumption is used to rule out the possibility that there are nonstationary periodic orbits of arbitrarily small periods, and this enables us to infer that the fixed point set of the time-$\tau$ maps coincide with $\dft^{-1}(0)$ for sufficiently small $\tau$.

A different generalization of the Poincar\'{e}-Hopf theorem not mentioned in \S \ref{sec:intro} is due to McCord \cite{mccord1989hopf}; this result says that the sum of the Hopf indices of those zeros of a smooth vector field contained in an isolated invariant set $S$ is equal to the Euler characteristic of the homology Conley index of $S$.
It would be interesting to develop a Conley index theory \cite{conley1978isolated,mischaikow1999conley,mischaikow2002conley} for hybrid systems and to investigate whether an analogous relationship holds in the hybrid setting.
A Conley index theory for hybrid systems would also be interesting since, together with the results of \cite{kvalheim2021conley}, it would constitute a first theoretical step toward extending rigorous computer-assisted analysis methods based on Conley theory \cite{mischaikow2002topological,kalies2005algorithmic} to hybrid dynamical systems.

The Poincar\'{e}-Hopf theorem relates the equilibria of flows generated by vector fields to the topology of the underlying state space.
Similarly, the Conley index furnishes global topological constraints on the isolated invariant sets of a flow via the generalized Morse-Smale theorem \cite[Sec.~IV.5.3]{conley1978isolated}, but these constraints are relatively coarse.
The constraints on equilibria obtained via the Poincar\'{e}-Hopf theorem are sharper; sharper topological constraints on nonstationary periodic orbits have also been obtained using tools such as the Fuller index \cite{fuller1967index}.
If such tools could be extended to hybrid systems, they might be useful for proving existence of periodic orbits such as those representing steady gaits in legged locomotion \cite{Burden_Sastry_Koditschek_Revzen_2016, morris2005restricted,hamed2016exponentially}.
Another approach to proving existence of periodic orbits is via the global bifurcation or continuation theory initiated by Alexander, Yorke, and others \cite{alexander1978global,mallet1982snakes,alligood1984families,fiedler1988global,kvalheim2021families}; extending this theory to hybrid systems may also prove fruitful.

In closing, we mention that the present paper was partially motivated by the generalization of Brockett's necessary condition for stabilizability of points \cite[Thm~1.(iii)]{brockett1983asymptotic} to one for stabilization of more general sets for continuous-time control systems in \cite[Thm~1]{kvalheim2021necessary}.
Using this generalization, necessary conditions for a continuous-time system to operate safely relative to some subset of state space were also derived \cite[Thm~2]{kvalheim2021necessary}.
A key tool used in the proofs of these generalizations is the classical Poincar\'{e}-Hopf theorem; thus, it seems that Theorems~\ref{th:poinc-hopf-hybrid} and \ref{th:poinc-hopf-inflowing} of the present paper might enable these generalizations to be extended to hybrid control systems.

\section*{Acknowledgments}
This work is supported by the Army Research Office (ARO) under the SLICE Multidisciplinary University Research Initiatives (MURI) Program, award W911NF1810327.
The author gratefully acknowledges helpful conversations with Paul Gustafson and Daniel E. Koditschek.

\bibliographystyle{IEEEtran}
\bibliography{ref}

\end{document}